\definecolor{citegreen}{rgb}{0,0.6,0}
\definecolor{refred}{rgb}{0.8,0,0}
\pgfplotsset{/pgf/number format/use comma,compat=newest}
\theoremstyle{plain}
\newtheorem{Teorema}{Theorem}[section]
\newtheorem{ackn}{Acknowledgements\hspace{-.2em}}
\theoremstyle{definition}
\newtheorem{definizione}[Teorema]{Definition}
\theoremstyle{remark}
\newtheorem{oss}[Teorema]{Remark}
\numberwithin{equation}{section}
\definecolor{codegreen}{rgb}{0,0.6,0}
\definecolor{codegray}{rgb}{0.5,0.5,0.5}
\definecolor{codepurple}{rgb}{0.58,0,0.82}
\definecolor{backcolour}{rgb}{0.95,0.95,0.92}
\lstdefinestyle{mystyle}{
 backgroundcolor=\color{backcolour}, 
 commentstyle=\color{codegreen},
 keywordstyle=\color{magenta},
 numberstyle=\tiny\color{codegray},
 stringstyle=\color{codepurple},
 basicstyle=\ttfamily\footnotesize,
 breakatwhitespace=false, 
 breaklines=true, 
 captionpos=b, 
 keepspaces=true, 
 numbers=left, 
 numbersep=5pt, 
 showspaces=false, 
 showstringspaces=false,
 showtabs=false, 
 tabsize=2
}
\def\R{{{\mathbb R}}}
\def\NN{{{\mathbb N}}}
\def\L{{{\mathscr L}}}
\def\XXint#1#2#3{{\setbox0=\hbox{$#1{#2#3}{\int}$} 
\vcenter{\hbox{$#2#3$}}\kern-.5\wd0}}
\newcommand{\weakkk}{\mathrel{\ensurestackMath{\stackon[-.25em]{\xrightharpoonup{\ \ \,}}{\scriptstyle{\,\star\,}}}}} 
\definecolor{MyDarkGreen}{rgb}{0,0.50,0.04}
\title{Quasiconvexity in the Riemannian setting}
\date{\today}
\author[Aurora Corbisiero]{Aurora Corbisiero}
\address[Aurora Corbisiero]{Department of Mathematics and Applications ``Renato Caccioppoli'', Universit\`a di Napoli Federico II}
\email{au.corbisiero@studenti.unina.it}
\author[Chiara Leone]{Chiara Leone}
\address[Chiara Leone]{Department of Mathematics and Applications ``Renato Caccioppoli'', Universit\`a di Napoli Federico II}
\email{chileone@unina.it}
\author[Carlo Mantegazza]{Carlo Mantegazza}
\address[Carlo Mantegazza]{Department of Mathematics and Applications ``Renato Caccioppoli'', Universit\`a di Napoli Federico II \& Scuola Superiore Meridionale, Naples, Italy}
\email[C. Mantegazza]{carlo.mantegazza@unina.it}
\patchcmd{\@setaddresses}{\indent}{\noindent}{}{}
\patchcmd{\@setaddresses}{\indent}{\noindent}{}{}
\patchcmd{\@setaddresses}{\indent}{\noindent}{}{}
\begin{document}

\begin{abstract}
We introduce a notion of quasiconvexity for continuous functions $f$ defined on the vector bundle of linear maps between the tangent spaces of a smooth Riemannian manifold $(M,g)$ and $\mathbb{R}^m$, naturally generalizing the classical Euclidean definition. We prove that this condition characterizes the sequential lower semicontinuity of the associated integral functional
\[
F(u, \Omega) = \int_{\Omega} f(du) \, d\mu
\]
with respect to the weak$^*$ topology of $W^{1,\infty}(\Omega, \mathbb{R}^m)$, for every bounded open subset $\Omega\subseteq M$.
\end{abstract}

\maketitle
\tableofcontents

\section{Introduction}
A fundamental result in the calculus of variations concerning the search for minimizers of variational functionals is the characterization of the sequential lower semicontinuity of integral functionals defined on Sobolev spaces $W^{1,p}(\Omega, \mathbb{R}^m)$, where $\Omega \subseteq \mathbb{R}^n$ is open and bounded. Specifically, for $1 \leqslant p \leqslant +\infty$, lower semicontinuity with respect to the weak topology (or weak$^*$ topology if $p=+\infty$) is equivalent to the quasiconvexity of the integrand, provided certain growth conditions are satisfied.

Historically, this equivalence was established through several steps: in the pioneering paper~\cite{articoloTonelli} by Tonelli, it is shown that for a twice differentiable and continuous function 
$$
f: [a,b]\times\mathbb{R}^m\times\mathbb{R}^m\to [0,+\infty)
$$
and for $u\in W^{1,1}((a,b),\mathbb{R}^m)$, the functional 
\begin{equation*}
F(u)=\int_{a}^bf(x, u(x), u'(x))\, dx
\end{equation*}
is sequentially weakly lower semicontinuous in $W^{1,1}(a,b)$ if and only if the function $f$ is convex in the third variable. In the scalar case $m=1$, this result was later generalized to functions defined on bounded open sets of $\mathbb{R}^n$ by various authors and Serrin in~\cite{Serrin15} proved that the differentiability assumptions are not actually required. Subsequent improvements of Serrin's theorem were given by De~Giorgi~\cite{degiorgi}, Olech~\cite{olech} and Ioffe~\cite{ioffe}. 

The results in the scalar case extend easily to the vectorial case. However, while for $m=1$ the semicontinuity theorem stated above is optimal, in the sense that the convexity assumption of $f(x,s,\xi)$ with respect to $\xi$ is necessary for the lower semicontinuity of $F$, in the vectorial case for 
$m>1$, there are functionals (of considerable interest in the theory of nonlinear elasticity, for instance) that are lower semicontinuous without $f$ being convex with respect to the matrix $\xi=\xi_{ij}$. 

In the case of $m>1$, the condition on $f$ that turned out to be necessary and, with additional assumptions, also sufficient for the lower semicontinuity of integral functionals, is quasiconvexity, introduced by Morrey~\cite{Morrey12} in~1952. 

\begin{definizione}\label{quasidef}
A continuous function $f:\mathbb{R}^{{n\times m}}\to \mathbb{R}$ is {\em quasiconvex} if for every ${\xi}\in \mathbb{R}^{{n\times m}} $ and for every open subset $\Omega$ of $\mathbb{R}^{n}$, there holds
\begin{equation}\label{quasiconvexdef}
f({\xi})\leqslant \fint_{\Omega}f({\xi}+D\varphi(x))\,d\L^n(x),
\end{equation}
for every function $\varphi\in C^{\infty}_c(\Omega, \mathbb{R}^m)$.\\
A real function $f: \mathbb{R}^n\times \mathbb{R}^m \times \mathbb{R}^{{n\times m}}\to \mathbb{R}$ is {\em quasiconvex} in the variable $\xi\in \mathbb{R}^{n\times m}$ if there exists a subset $Z$ of $\mathbb{R}^n$ with $\L^n(Z)=0$, such that for every ${x}\in \mathbb{R}^n\setminus Z$ and for every ${s}\in \mathbb{R}^m$ the function $\xi \mapsto f({x},{s},\xi )$ is {\em quasiconvex.}
\end{definizione}

More precisely, Morrey showed that under some strong regularity assumptions on the function $f$, the equivalence between its quasiconvexity and the sequential weak$^*$ lower semicontinuity in $ W^{1,\infty}(\Omega, \mathbb{R}^m)$ of the functional 
\begin{equation}\label{Fmorrey}
u\mapsto F(u,\Omega)=\int_{\Omega}f(x,u(x), D u(x))\,d\L^n(x)
\end{equation}
holds. Meyers then extended Morrey’s result to the setting of $W^{k,p}(\Omega, \mathbb{R}^m)$ spaces in~\cite{Meyers}.

Acerbi and Fusco in~\cite{AcerbiFusco} obtained a significant improvement of this result: they indeed established such equivalence for Carath\'eodory integrands with appropriate growth conditions in $W^{1,p}(\Omega, \mathbb{R}^m)$, for $1\leqslant p\leqslant +\infty$. We also mention that Marcellini in~\cite{marcellini} presented an alternative proof of this fact. We list below the theorems proved by Acerbi and Fusco.

\begin{Teorema}[Acerbi--Fusco]\label{Fusco1}
Let $f: \mathbb{R}^n\times\mathbb{R}^m\times\mathbb{R}^{{n\times m}}\to \mathbb{R}$ be a Carath\'eodory function satisfying
$$
0\leqslant f(x, s, \xi)\leqslant a(x)+b(s, \xi),
$$
for almost every $x\in \mathbb{R}^n$, $s\in \mathbb{R}^m$ and $\xi\in \mathbb{R}^{{n\times m}}$, where $a:\R^n\to\R$ is nonnegative and locally summable and $b:\R^n\times\mathbb{R}^{{n\times m}}\to\R$ is nonnegative and locally bounded.\\
Then, $f$ is quasiconvex in $\xi$ if and only if for every open bounded set $\Omega$ in $\mathbb{R}^n$ the functional $u\mapsto F(u, \Omega)$ is sequentially weakly$^*$ lower semicontinuous on $W^{1,\infty}(\Omega, \mathbb{R}^m)$.
\end{Teorema}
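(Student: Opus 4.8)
The plan is to prove the two implications separately. For the \emph{necessity}, assume $F(\cdot,\Omega)$ is sequentially weak$^*$ lower semicontinuous for every bounded open $\Omega\subseteq\R^n$, and fix $s_0\in\R^m$, $\xi_0\in\R^{n\times m}$ and $\varphi\in C^\infty_c(Q,\R^m)$ with $Q=(0,1)^n$, extended $Q$-periodically to $\R^n$; let $x_0$ be a common Lebesgue point of $x\mapsto f(x,s_0,\xi_0)$ and of $x\mapsto\fint_Q f(x,s_0,\xi_0+D\varphi(y))\,dy$. On the cube $Q(x_0,\rho)$ I would set
\[
u_j(x)=s_0+\xi_0(x-x_0)+\tfrac1j\,\varphi\bigl(j(x-x_0)\bigr),
\]
so that $Du_j(x)=\xi_0+D\varphi(j(x-x_0))$ is equibounded while $u_j\to u_0$ uniformly, where $u_0(x)=s_0+\xi_0(x-x_0)$; hence $u_j\weakkk u_0$ in $W^{1,\infty}$. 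Lower semicontinuity on $Q(x_0,\rho)$ together with the Riemann--Lebesgue lemma (for the periodic rescaling, using continuity in the last variable and the growth bound for uniform integrability) gives
\[
\int_{Q(x_0,\rho)}f\bigl(x,u_0(x),\xi_0\bigr)\,dx
\leqslant\liminf_{j\to\infty}F(u_j,Q(x_0,\rho))
=\int_{Q(x_0,\rho)}\Bigl(\fint_Q f\bigl(x,u_0(x),\xi_0+D\varphi(y)\bigr)\,dy\Bigr)dx.
\]
Dividing by $\rho^n$, letting $\rho\to0$ and invoking the Lebesgue point property, $u_0(x_0)=s_0$, and continuity of $f$ in $(s,\xi)$, I obtain $f(x_0,s_0,\xi_0)\leqslant\fint_Q f(x_0,s_0,\xi_0+D\varphi(y))\,dy$ at a.e. $x_0$, which is the quasiconvexity inequality (the version on a general $\Omega$ follows by the standard scaling and covering argument).

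For the \emph{sufficiency}, let $u_j\weakkk u$ in $W^{1,\infty}(\Omega,\R^m)$; I may assume $\lim_j F(u_j,\Omega)=L<\infty$, and using the equi-Lipschitz bound $\|u_j\|_{W^{1,\infty}}\leqslant M$ and Arzel\`a--Ascoli I pass to a subsequence with $u_j\to u$ uniformly on compact subsets and $Du_j\weakkk Du$ in $L^\infty$. Given $\varepsilon>0$, the first step is the Scorza--Dragoni theorem: choose a compact $K\subseteq\Omega$ with $\L^n(\Omega\setminus K)<\varepsilon$ on which $f$ is continuous, hence uniformly continuous on $K\times\overline{B}_M\times\overline{B}_M$; the contribution of $\Omega\setminus K$ is discarded using $0\leqslant f\leqslant a+b$, the local boundedness of $b$ on the bounded range of $(u_j,Du_j)$, and the absolute continuity of $\int a\,d\L^n$. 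Next I freeze the lower order variable, replacing $f(x,u_j(x),Du_j(x))$ by $f(x,u(x),Du_j(x))$ up to an error that is small by uniform continuity on $K$ and by $u_j\to u$ uniformly. Then I cover almost all of $K$ by finitely many disjoint small cubes $Q_i=Q(x_i,r_i)$ centred at Lebesgue points of $Du$ and of $x\mapsto f(x,u(x),\cdot)$, on which $u$ is close to its affine tangent map $\ell_i(x)=u(x_i)+Du(x_i)(x-x_i)$ and $f(x,u(x_i),\cdot)$ is close to $f(x_i,u(x_i),\cdot)$.

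On each cube the decisive step is a boundary adjustment: since $Du_j$ is not directly of the form $Du(x_i)+D\varphi$ with $\varphi$ compactly supported in $Q_i$, I modify $u_j$ inside a thin annulus near $\partial Q_i$ so that it coincides with $\ell_i$ on $\partial Q_i$, obtaining $v^i_j=\ell_i+\varphi^i_j$ with $\varphi^i_j\in C_c(Q_i,\R^m)$ admissible for \eqref{quasiconvexdef}. The annulus is selected, by a De~Giorgi type averaging over many concentric layers, so that the gradient of the interpolation stays controlled; the extra energy on the annulus is then bounded, via $f\leqslant a+b$ and the uniform $W^{1,\infty}$ bound, by $\int_{\text{annulus}}(a+C)\,d\L^n$, which is negligible as $j\to\infty$ and the mesh refines. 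Quasiconvexity yields $f(x_i,u(x_i),Du(x_i))\leqslant\fint_{Q_i}f(x_i,u(x_i),Du(x_i)+D\varphi^i_j)\,d\L^n$; summing over $i$, reversing the freezings and the localization, and letting $j\to\infty$ and then $\varepsilon,\max_i r_i\to0$ produces $\int_\Omega f(x,u,Du)\,d\L^n\leqslant L$.

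I expect the boundary adjustment to be the core difficulty: reconnecting $u_j$ to the affine target $\ell_i$ along $\partial Q_i$ is what makes the perturbation admissible for quasiconvexity, and the whole argument hinges on keeping the energy spent on the transition annulus asymptotically negligible. This is precisely where the structural growth hypothesis $f\leqslant a+b$ with $b$ locally bounded, combined with the equi-Lipschitz bound, is indispensable, and where the pigeonhole choice of the annulus among many candidate layers must be carried out with care.
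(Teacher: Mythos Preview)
The paper does not give its own proof of this theorem; it is quoted as a classical result of Acerbi and Fusco, cited from~\cite{AcerbiFusco}, and serves as background motivation for the Riemannian generalization developed later. There is therefore no proof in the paper to compare yours against directly.

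Your outline is the standard route to the Euclidean result and is essentially correct: the oscillating construction $u_j=s_0+\xi_0(x-x_0)+j^{-1}\varphi(j(x-x_0))$ together with the Lebesgue-point and Riemann--Lebesgue arguments yields necessity, while Scorza--Dragoni, freezing of the lower-order argument, covering by small cubes, and the De~Giorgi layer selection for the boundary matching yield sufficiency.

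For context, when the paper proves the Riemannian analogue (Theorems~\ref{QCR->sci} and~\ref{sci->QCR}), the necessity argument is indeed the same oscillation construction you describe, but the sufficiency direction follows a different path, due to Fonseca--M\"uller. Instead of the De~Giorgi slicing, one sets $\varphi_j=\psi(u_j-u)$ for a smooth cutoff $\psi$, applies the quasiconvexity inequality directly to this perturbation, controls the error terms via a uniform modulus of continuity of $f$, and then regards $f(du_j)\,d\mu$ as a sequence of Radon measures, passing to a weak$^*$ limit and invoking Radon--Nikodym differentiation to conclude. This avoids the explicit covering by cubes and the pigeonhole layer construction you describe; the trade-off is that it relies on continuity of $f$ (which the paper assumes throughout), whereas your approach, being the original Acerbi--Fusco one, handles the more general Carath\'eodory case stated in the theorem.
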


\begin{Teorema}[Acerbi--Fusco]\label{Teocap1pfinito}
 Let $1\leqslant p<+\infty$ and $f: \mathbb{R}^n\times\mathbb{R}^m\times\mathbb{R}^{{n\times m}}\to \mathbb{R}$ be a Carath\'eodory function satisfying
 \begin{equation}\label{II.6}
 0\leqslant f(x,s,\xi)\leqslant a(x)+C\big(|s|^p+|\xi|^p\big),
 \end{equation}
 for almost every $x\in \mathbb{R}^n$, $s\in \mathbb{R}^m$ and $\xi\in \mathbb{R}^{{n\times m}}$, where $a:\R^n\to\R$ is nonnegative and locally summable and $C$ is a nonnegative constant.\\
Then, $f$ is quasiconvex in $\xi$ if and only if for every open bounded set $\Omega$ in $\mathbb{R}^n$ the functional $u\mapsto F(u, \Omega)$ is sequentially weakly lower semicontinuous on $W^{1,p}(\Omega, \mathbb{R}^m)$. 
\end{Teorema}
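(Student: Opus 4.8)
The statement asserts an equivalence, so the plan is to prove the two implications separately. I would begin with the easier \emph{necessity}: assuming $u\mapsto F(u,\Omega)$ is sequentially weakly lower semicontinuous on $W^{1,p}$ for every bounded open $\Omega$, I want to produce the inequality \eqref{quasiconvexdef} at almost every point. Fix $\xi_0\in\mathbb{R}^{n\times m}$, $s_0\in\mathbb{R}^m$, a test function $\varphi\in C^\infty_c(Q,\mathbb{R}^m)$ on the open unit cube $Q$ extended $Q$-periodically to $\mathbb{R}^n$, and a point $x_0$ that is a Lebesgue point of $x\mapsto f(x,s_0,\xi_0+D\varphi(\,\cdot\,))$. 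On the cube $Q_r(x_0)=x_0+rQ$ set
\[
u_0(x)=s_0+\xi_0(x-x_0),\qquad u_j(x)=u_0(x)+\tfrac{r}{j}\,\varphi\!\big(\tfrac{j}{r}(x-x_0)\big),
\]
so that $Du_j=\xi_0+D\varphi(\tfrac{j}{r}(x-x_0))$ is bounded in $L^\infty$ while $u_j\to u_0$ uniformly; hence $u_j\rightharpoonup u_0$ in $W^{1,p}(Q_r(x_0))$. Applying the hypothesis and evaluating $\lim_j F(u_j,Q_r(x_0))$ by the Riemann--Lebesgue lemma for the $Q$-periodic integrand $y\mapsto f(x,s_0,\xi_0+D\varphi(y))$ — the replacement of $u_j(x)$ by $s_0$ being licensed by the continuity of $f$ in $(x,s)$, the growth bound \eqref{II.6} and dominated convergence — and then dividing by $|Q_r(x_0)|$ and letting $r\to0$, one obtains
\[
f(x_0,s_0,\xi_0)\leqslant\fint_Q f(x_0,s_0,\xi_0+D\varphi(y))\,d\L^n(y),
\]
which is quasiconvexity in $\xi$ at almost every $x_0$, for every $s_0,\xi_0,\varphi$.

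For the \emph{sufficiency} — the substantial direction — assume $f$ quasiconvex in $\xi$ with growth \eqref{II.6}, take $u_j\rightharpoonup u$ in $W^{1,p}(\Omega,\mathbb{R}^m)$, and aim at $F(u,\Omega)\leqslant\liminf_j F(u_j,\Omega)$, which we may assume to be a finite limit. I would organize the argument in three reductions. First, pass from the Carath\'eodory integrand to one jointly continuous in $(x,s,\xi)$: by the Scorza--Dragoni theorem there is, for each $\varepsilon>0$, a compact $K_\varepsilon\subseteq\Omega$ with $|\Omega\setminus K_\varepsilon|<\varepsilon$ on which $f$ is continuous, and the nonnegativity together with \eqref{II.6} controls the discarded part. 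Second, on a fine partition of $K_\varepsilon$ into small cubes, \emph{freeze} the slow variables, replacing $f(x,u_j,Du_j)$ by $f(x_0,s_0,Du_j)$ with $x_0,s_0$ the local values of $x$ and $u$; the error is absorbed using the compact embedding $W^{1,p}\hookrightarrow L^p$ (so $u_j\to u$ strongly) and the continuity estimate for integrands of $p$-growth. Third, to the frozen autonomous integrand apply the definition of quasiconvexity, the admissible competitors being furnished by the \emph{Lipschitz truncation lemma}: any sequence bounded in $W^{1,p}$ admits, at each level $\lambda$, modifications $u_j^\lambda$ that are uniformly $C\lambda$-Lipschitz and agree with $u_j$ outside an exceptional set $E_j^\lambda$ with $|E_j^\lambda|\lesssim\lambda^{-p}$.

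The crux — and the step I expect to be the main obstacle — is controlling the energy on the exceptional sets $E_j^\lambda$. The naive combination $|E_j^\lambda|\lesssim\lambda^{-p}$ with $|Du_j^\lambda|\lesssim\lambda$ is useless, since $\lambda^{-p}\lambda^{p}=1$ does not vanish; what is genuinely required is $\lambda^{p}|E_j^\lambda|\to0$, which holds exactly when the family $\{|Du_j|^p\}$ is \emph{equi-integrable}. For $p=1$ this is automatic, being equivalent to the weak $L^1$ compactness of $\{Du_j\}$ by the Dunford--Pettis theorem; for $1<p<+\infty$ a bounded $L^p$ sequence need not be equi-integrable, and one recovers the estimate up to a vanishing error through the Chac\'on biting lemma, or equivalently by splitting $u_j$ into an equi-integrable part and a remainder negligible in measure. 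Granting this, $\int_{E_j^\lambda}f(x_0,s_0,Du_j^\lambda)\,d\L^n\to0$, so the Lipschitz competitors retain essentially all the energy; assembling the cube-by-cube quasiconvexity inequalities over $K_\varepsilon$, then sending the mesh to $0$, $\lambda\to\infty$, and $\varepsilon\to0$ in this order, yields $F(u,\Omega)\leqslant\lim_j F(u_j,\Omega)$. An efficient alternative to the explicit covering is the blow-up method: one passes to the weak$^*$ limit measure of $|Du_j|^p\,d\L^n$, differentiates it at $\L^n$-almost every point, and reduces the inequality for its Radon--Nikodym density to a single rescaled application of quasiconvexity and Lipschitz truncation — where the same equi-integrability is precisely what makes the blow-up converge.
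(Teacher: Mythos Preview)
The paper does \emph{not} prove this theorem: it is quoted in the introduction as a result of Acerbi and Fusco~\cite{AcerbiFusco}, with no proof supplied. Consequently there is no ``paper's own proof'' to compare your proposal against.

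That said, your outline is the standard one and is essentially correct. Necessity via oscillating periodic competitors and the Riemann--Lebesgue lemma is the classical argument (and is, in fact, the mechanism the paper reuses in its Riemannian Theorem~\ref{sci->QCR}). For sufficiency, the route Scorza--Dragoni $\Rightarrow$ freezing of $(x,s)$ $\Rightarrow$ Lipschitz truncation is exactly Acerbi--Fusco's original scheme, and you have correctly isolated the genuine difficulty: controlling the energy on the bad sets $E_j^\lambda$ requires more than the crude bound $|E_j^\lambda|\lesssim\lambda^{-p}$. Your remedy via Chac\'on's biting lemma (or equivalently the decomposition into a $p$--equi--integrable part plus a remainder small in measure) is one of the accepted ways through; the alternative is Acerbi--Fusco's own trick of averaging over a geometric sequence of truncation levels and selecting a good one by pigeonhole. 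Two minor points to be explicit about when you write it out in full: (i)~to feed the truncated sequence into the quasiconvexity inequality you still need a cutoff so that the competitor has zero boundary data on each small cube, and the resulting boundary--layer error must be estimated separately; (ii)~in the freezing step, the replacement of $u_j$ by $u$ in the $s$--slot uses strong $L^p$ convergence together with the $p$--growth \eqref{II.6} and a quantitative continuity estimate (e.g.\ via the locally Lipschitz dependence of a quasiconvex function with $p$--growth on $\xi$), not merely ``continuity''.
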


Our aim is to develop an analogue of this theory in the Riemannian setting. Precisely, we will consider a smooth, complete and connected Riemannian manifold $(M,g)$ and a continuous function
\begin{equation*}
f:\mathscr{L}(TM, \mathbb{R}^m)\to \mathbb{R},
\end{equation*}
where $\mathscr{L}(TM, \mathbb{R}^m)$ is the vector bundle of the linear maps between a tangent space of $M$ and $\R^m$, namely
$$
\mathscr{L}(TM, \mathbb{R}^m)=\big\{\alpha:T_xM\to \mathbb{R}^m\,\,\big|\,\, \text{$x\in M$ and $\alpha$ is linear}\big\}.
$$
Then, after introducing a generalization of the notion of quasiconvexity (extending the usual one in the case of the Euclidean spaces), we will show that $f$ is quasiconvex in our sense if and only if for every open and bounded subset $\Omega\subseteq M$, the functional 
$$
u\mapsto F(u,\Omega)=\int_\Omega f(du)\,d\mu
$$
(where $\mu$ is the canonical measure of $(M,g)$) is sequentially lower semicontinuous in the weak* topology of $W^{1,\infty}(\Omega,\R^m)$, analogously to the Euclidean case.

At the end of the paper we discuss some open problems and possible future research directions in this Riemannian context.

\begin{ackn}
The authors wish to thank Luigi~Ambrosio, Andrea~Braides, and Nicola~Fusco for their valuable suggestions, insightful comments and for carefully reading the preliminary versions of this manuscript.\\
The last two authors are members of the INdAM--GNAMPA and partially supported by the PRIN Project 2022E9CF89 ``GEPSO -- Geometric Evolution Problems and Shape Optimization''.
\end{ackn}

\section{Quasiconvexity in the Riemannian setting}\label{quasisec}

In this section, after introducing a suitable ``Riemannian'' definition of quasiconvexity generalizing the ``classical'' Euclidean one, we will prove that the sequential weak$^*$ lower semicontinuity in $W^{1,\infty}$ of a functional holds if and only if the integrand is quasiconvex, according to such definition.

\medskip

Let $(M,g)$ be a smooth, connected and complete Riemannian manifold of dimension $n$. We fix some notation for this section. 

For every $x\in M$ and $r>0$ such that $\sqrt{n\,}r/2$ is smaller than the injectivity radius of $M$ at $x$, we define the cube
\begin{equation*}
\widetilde{Q}^r_x=r\bigg(-\frac{1}{2}, \frac{1}{2} \bigg)^n\subseteq T_{x}M\approx\R^n
\end{equation*}
(where the identification of $T_xM$ with $\R^n$ is done via an orthonormal basis of $T_xM$) and its diffeomorphic image
\begin{equation*}
 Q^r_{x}=\operatorname{exp}_{x}\big(\widetilde{Q}^r_x\big)\subseteq M
\end{equation*}
by means of the exponential map
$$
\operatorname{exp}_{x}: T_{x}M\to M.
$$

We now define $\mathscr{L}(TM, \mathbb{R}^m)$ as the space of the linear maps between a tangent space of $M$ and $\R^m$, namely
$$
\mathscr{L}(TM, \mathbb{R}^m)=\big\{\alpha:T_xM\to \mathbb{R}^m\,\,\big|\,\, \text{$x\in M$ and $\alpha$ is linear}\big\}.
$$
This space has a natural structure of vector bundle via the local parametrizations of $M$, for instance if $m=1$ it coincides with $T^*M$ and in general it is the union of all the spaces $(T_x^*M)^m$ for $x\in M$ (which are the fibers on the points of $M$). It is clearly locally diffeomorphic to $\R^n\times\R^{n\times m}$, indeed, choosing a local orthonormal frame $E_1,\dots,E_n$ in a neighbourhood $U\subseteq M$ (diffeomorphic to $\R^n$) of a point $x_0\in M$, in order to locally ``trivialize'' the vector bundle, we define the map $I:\mathscr{L}(TU, \mathbb{R}^m)\to\R^{n\times m}$ as
\begin{equation}\label{mapI}
\alpha\mapsto I\alpha=\big(\alpha(E_1),\dots,\alpha(E_n)\big)\in(\R^m)^n,
\end{equation}
for every $\alpha\in\mathscr{L}(TU, \mathbb{R}^m)$, hence the map sending any $\alpha:T_xM\to\R^m$ to 
$$
(x,I\alpha)=\big(x,\alpha(E_1),\dots,\alpha(E_n)\big)\in U\times(\R^m)^n\approx\R^n\times\R^{n\times m}
$$
is a diffeomorphism.\\
Then, considering the standard quadratic norm $\Vert\cdot\Vert$ on $(\R^m)^n\approx \R^{n\times m}$, we have a distance $\delta$ on $\mathscr{L}(TU, \mathbb{R}^m)$ defined as follows: 
\begin{equation}\label{distL}
\delta(\alpha,\beta)=d^M(x,y)+\Vert I\alpha-I\beta\Vert=d^M(x,y)+\sqrt{\sum_{i=1}^n\sum_{j=1}^m \big|\alpha^j(E_i)-\beta^j(E_i)\big|^2\,}
\end{equation}
for every $\alpha,\beta\in\mathscr{L}(TU, \mathbb{R}^m)$ such that $\alpha\in (T_x^*M)^m$ and $\beta\in (T_y^*M)^m$, where $d^M$ is the Riemannian distance on $M$. It is then easy to see that such distance, restricted to every fiber $(T_x^*M)^m$ coincides with the one associated to the ``quadratic'' norm (equivalent to the operator norm) induced by the metric tensor $g$ of $M$, that we will denote with $|\cdot|_{g_x}$.\\
Clearly, if we have a function $u: M\to\R^m$, its differential map $x\mapsto du[x]:T_xM\to\R^m$ is a section of $\mathscr{L}(TM, \mathbb{R}^m)$.

\medskip

{\em We chose to adopt the notation $du[x]$ for the differential of a map $u: M\to\R^m$ at a point $x\in M$, for the sake of clarity in the computations that follow.}

\begin{definizione}\label{RiemannianQC}
Let $(M,g)$ be a smooth, connected and complete Riemannian manifold and $\mu$ its canonical volume measure. A continuous function $f: \mathscr{L}(TM, \mathbb{R}^m)\to \mathbb{R}$ is called {\em quasiconvex} if for every $x_0\in M $, $\alpha_{x_0}\in (T_{x_0}M)^m\subseteq\mathscr{L}(TM, \mathbb{R}^m)$ and $\varphi\in C^\infty_c(Q_{x_0}^r,\mathbb{R}^m)$, there holds
\begin{equation}\label{quasidefR}
f(\alpha_{x_0})\leqslant\fint_{Q_{x_0}^r} f\big(\alpha_{x_0}+d\varphi [x]\circ d\exp_{x_0}[\exp_{x_0}^{-1}(x)]\big)\,d\mu(x)+o(1),
\end{equation}
where $o(1)$ is a function which goes to zero as $r\to 0$ and depends in a monotonically nondecreasing way only on the $L^{\infty}$ norm of $d\varphi$ (for fixed $x_0$ and $\alpha_{x_0}$).
\end{definizione}

\begin{oss}
We mention that another definition of quasiconvexity for maps defined on manifolds was given in~\cite{Li}.
\end{oss}

\begin{oss}
The main obstacle in generalizing to the Riemannian context the usual definition of quasiconvexity in the Euclidean ambient, is due to the fact that we cannot identify all the tangent spaces of a manifold as in $\R^n$, hence two differentials at two different points cannot be added together. Thus, in the definition above we needed to morally ``carry'' the differential of the perturbation at every point of the manifold to be an element of $T_{x_0}^*M$, via the differential of the exponential map at $x_0\in M$ (exponential map that in the Euclidean case would simply be the identity, under the identification mentioned above -- see also the following discussion). This forces the introduction of a ``correction term'' in the quasiconvexity inequality that one reasonably expects to go to zero as we get closer and closer to the point $x_0$, since the differential of the exponential map then tends to be the identity.
\end{oss}

If $(M,g)$ is $\R^n$ with its standard metric, for every point $x\in\R^n$ we have a standard identification $T^*_x\R^n\approx T_x{\R^n}\approx\R^n$, hence a function $f:\mathscr{L}(T\R^n, \mathbb{R}^m)\to \mathbb{R}$ can be one--to--one associated with a function $\widetilde{f}:\R^n\times\R^{n\times m}\to\R$ as follows: if $A=A_i^j\in \R^{n\times m}$, for $i\in\{1,\dots,n\}$ and $j\in\{1,\dots,m\}$, then, 
\begin{equation}\label{eq560}
\widetilde{f}({x_0},A)=f(\alpha_{x_0}),
\end{equation}
where $\alpha_{x_0}\in (T^*_{x_0}\R^n)^m\approx\R^{n\times m}$ is given by 
$$
\alpha^j_{x_0}(v)=\sum_{i=1}^n A_i^j v^i,
$$
for every vector $v=(v^1,\dots,v^n)\in T_{x_0}\R^n\approx\R^n$ and viceversa, if $\alpha_{x_0}$ is operating as in this formula, then $f$ is defined by equality~\eqref{eq560}.\\
Then, with this one--to--one correspondence, our ``Riemannian'' definition of quasiconvexity in the case of the Euclidean space, is equivalent to 
\begin{equation}\label{eq562}
\widetilde{f}\big(x_0,A\big)\leqslant\fint_{Q_{x_0}^r} \widetilde{f}\big(x,A+D\varphi(x)\big)\,d\L^n(x)+o(1),
\end{equation}
for every $x_0\in\R^n$, $A\in\R^{n\times m}$ and $\varphi\in C^\infty_c(Q_{x_0}^r,\mathbb{R}^m)$, since, under the identification $T^*_x\R^n\approx T_x{\R^n}\approx\R^n$, all the exponential maps are the identity and it is clearly satisfied by a continuous quasiconvex function $\widetilde{f}:\R^n\times\R^{n\times m}\to\R$, actually without the ``correction term'' $o(1)$, according to the usual Definition~\ref{quasidef} of quasiconvexity.\\
Hence, a ``classical'' quasiconvex function $\widetilde{f}$ in the Euclidean space, having as arguments only $x$ and $A$, is also quasiconvex in the sense of Definition~\ref{RiemannianQC} (with the above identification of $\widetilde{f}$ with $f$). Viceversa, if the function $f$ satisfies Definition~\ref{RiemannianQC}, then, by Theorem~\ref{QCR->sci} that we are going to show in the next section and the identification of $\widetilde{f}$ with $f$, it follows that the functional 
$$
u\mapsto F(u,\Omega)=\int_\Omega \widetilde{f}\big(x,Du(x)\big)\,d\L^n(x)
$$
is sequentially lower semicontinuous in the weak* topology of $W^{1,\infty}(\Omega,\R^m)$, hence $\widetilde{f}$ is quasiconvex according to the Definition~\ref{quasidef}, by the Acerbi--Fusco Theorem~\ref{Fusco1}.\\
Thus, our Definition~\ref{RiemannianQC} is an extension to Riemannian manifolds of the usual definition of quasiconvexity for continuous integrands.

\section{Quasiconvexity and semicontinuity}

We are going to show the analogues of some special cases of Theorem~\ref{Fusco1} of Acerbi and Fusco in our context, following the line of~\cite{Ambrosio} (we mention that the main argument in the next Theorem~\ref{QCR->sci} is due to Fonseca--Muller~\cite{FonsecaMuller}). We underline (see the discussion above) that we are generalizing the special case of integrands which are continuous and depend only on $x$ and $Du$ ({\em but not on $u$} -- see the final section), from the Euclidean to the Riemannian setting.

\medskip

The weak$^*$ convergence of a sequence $u_j\weakkk u$ in $W^{1,\infty}(\Omega,\R^m)$ is defined in the usual way: the sequences of integrals of $u_j$ and $du_j$ ``against'' fixed functions and $1$--forms in $L^1(\Omega,\R^m)$, respectively, converge to the analogous integrals relative to $u$ and $du$. Moreover, we notice that by the weak$^*$ sequential compactness of the closed unit ball of $L^{\infty}(\Omega,\R^m)$ given by the Banach--Alaoglu--Bourbaki theorem, we can simply ask that the sequence of differentials $du_j$ is bounded and that $u_j$ weakly$^*$ converges to $u$ in $L^{\infty}(\Omega,\R^m)$.

\begin{Teorema}\label{QCR->sci}
Let $(M,g)$ be a smooth, connected and complete Riemannian manifold and $\mu$ its canonical volume measure. Let $f:\mathscr{L}(TM, \mathbb{R}^m)\to \mathbb{R}$ be continuous and quasiconvex in the sense of Definition~\ref{RiemannianQC}. Then, for every open and bounded subset $\Omega\subseteq M$, the functional 
$$
u\mapsto F(u,\Omega)=\int_\Omega f(du)\,d\mu
$$
is sequentially lower semicontinuous in the weak* topology of $W^{1,\infty}(\Omega,\R^m)$, that is,
$$
F(u,\Omega)=\int_{\Omega}f\big(du\big)\,d\mu\leqslant\liminf_{j\to\infty }\int_{\Omega}f\big(du_j\big)\,d\mu=\liminf_{j\to\infty}F(u_j, \Omega),
$$
for every sequence $u_j\weakkk u$ in $W^{1,\infty}(\Omega,\R^m)$.
\end{Teorema}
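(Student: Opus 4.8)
The plan is to adapt the classical Fonseca–Müller blow-up argument to the Riemannian setting. Given a sequence $u_j\weakkk u$ in $W^{1,\infty}(\Omega,\R^m)$, I want to show $F(u,\Omega)\leqslant\liminf_j F(u_j,\Omega)$. First I would pass to a subsequence realizing the liminf, so that $F(u_j,\Omega)\to L:=\liminf_j F(u_j,\Omega)$, and I may assume $L<+\infty$. The key object is the sequence of nonnegative Radon measures $\nu_j:=f(du_j)\,\mu\restriction\Omega$ (nonnegativity after subtracting a constant, using that $f(du_j)$ is bounded below on the relevant compact set of the bundle since $du_j$ is $L^\infty$-bounded and $f$ is continuous). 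By weak$^*$ compactness of measures I extract a subsequence with $\nu_j\weakkk\nu$ for some finite nonnegative Radon measure $\nu$ on $\Omega$, with $\nu(\Omega)\leqslant L$. It then suffices to prove the pointwise Radon–Nikodym lower bound
\begin{equation}\label{eq:RNbound}
\frac{d\nu}{d\mu}(x_0)\geqslant f\big(du[x_0]\big)\qquad\text{for $\mu$-a.e. }x_0\in\Omega,
\end{equation}
since integrating this and using $\nu(\Omega)\leqslant L$ together with the Lebesgue decomposition (the singular part only helps) yields the claim.

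To establish~\eqref{eq:RNbound} I would localize at a good point $x_0$. By Lebesgue differentiation on the manifold, for $\mu$-a.e. $x_0$ the Radon–Nikodym derivative equals $\lim_{r\to0}\nu(Q^r_{x_0})/\mu(Q^r_{x_0})$, and I would further demand that $x_0$ be a point of approximate differentiability of $u$, i.e. a Lebesgue point of $du$ where a first-order Taylor expansion of $u$ is valid. Choosing radii $r$ along which $\nu(\partial Q^r_{x_0})=0$, I get $\nu(Q^r_{x_0})=\lim_j\nu_j(Q^r_{x_0})=\lim_j\int_{Q^r_{x_0}}f(du_j)\,d\mu$. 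Now I pull everything back to the tangent space via $\exp_{x_0}$: setting $\alpha_{x_0}:=du[x_0]\in(T^*_{x_0}M)^m$, the functions $v_j:=u_j-u$ have differentials that, after transport by $d\exp_{x_0}$ as in Definition~\ref{RiemannianQC}, furnish admissible perturbations of $\alpha_{x_0}$. The plan is to rescale $v_j$ at scale $r$, cut them off near $\partial Q^r_{x_0}$ to make them compactly supported (controlling the error from the cutoff using the $L^\infty$ bound on $v_j$ and a diagonal choice of indices), and feed the resulting $\varphi\in C^\infty_c(Q^r_{x_0},\R^m)$ into the Riemannian quasiconvexity inequality~\eqref{quasidefR}. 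This produces
\begin{equation*}
f(\alpha_{x_0})\leqslant\frac{1}{\mu(Q^r_{x_0})}\int_{Q^r_{x_0}}f\big(\alpha_{x_0}+d\varphi[x]\circ d\exp_{x_0}\big)\,d\mu(x)+o(1),
\end{equation*}
and the right-hand integrand must be matched with $f(du_j)$ up to errors that vanish.

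**The main obstacle** will be this matching step: controlling the discrepancy between $f(du_j)$ and $f\big(\alpha_{x_0}+d\varphi[x]\circ d\exp_{x_0}[\exp_{x_0}^{-1}(x)]\big)$ uniformly as $r\to0$ and $j\to\infty$. Three errors must be shown to vanish along a suitable diagonal sequence $j(r)$: first, replacing $du[x]$ by its value $\alpha_{x_0}$ at the center costs a term controlled by the approximate continuity of $du$ at $x_0$ together with uniform continuity of $f$ on the compact set of bundle elements traversed; second, the transport operator $d\exp_{x_0}[\exp_{x_0}^{-1}(x)]$ differs from the identity by $O(r)$ (this is exactly where the curvature enters and is absorbed into the $o(1)$ of Definition~\ref{RiemannianQC}, whose monotone dependence only on $\|d\varphi\|_{L^\infty}$ is the crucial structural feature making the argument close); third, the truncation error from the cutoff near $\partial Q^r_{x_0}$, which is the classical delicate point, handled by choosing the cutoff region of controlled measure and invoking that $\|v_j\|_{L^\infty}\to0$ on compact subsets (from $v_j\to0$ uniformly, since $u_j\to u$ weak$^*$ in $L^\infty$ forces strong local uniform convergence for equi-Lipschitz sequences via Ascoli–Arzelà) while $\|dv_j\|_{L^\infty}$ stays bounded. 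The point where the Riemannian geometry genuinely differs from the Euclidean case is that the pushforward of Lebesgue measure on $\widetilde{Q}^r_{x_0}$ under $\exp_{x_0}$ is $\mu$ weighted by the Jacobian of the exponential map, which equals $1+O(r^2)$; I would verify that this Jacobian factor, the transport error, and the normalization $\mu(Q^r_{x_0})=\L^n(\widetilde{Q}^r_{x_0})(1+o(1))$ are all consistently absorbed so that the Euclidean Fonseca–Müller estimate survives the transfer. Combining the three error bounds with a careful diagonal extraction yields $f(\alpha_{x_0})\leqslant d\nu/d\mu(x_0)$, which is~\eqref{eq:RNbound}.
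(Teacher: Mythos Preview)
Your proposal is correct and follows essentially the same Fonseca--M\"uller blow-up argument as the paper: localize via the limit measure $\nu$ of $f(du_j)\,d\mu$, apply the Riemannian quasiconvexity inequality on $Q^r_{x_0}$ to the cut-off test function $\psi(u_j-u)$, and control the three error terms (replacing $du[x]$ by $du[x_0]$, the transport $d\exp_{x_0}\to\mathrm{Id}$, and the cutoff) via the uniform modulus of continuity of $f$ together with $\|u_j-u\|_\infty\to0$ from Ascoli--Arzel\`a and the monotone dependence of the $o(1)$ term on $\|d\varphi\|_\infty$. The only cosmetic differences are organizational---the paper introduces $\nu$ at the end rather than the beginning, takes the limits in $j$ and $r$ sequentially rather than diagonally, and never actually rescales (your test function lives in $C^\infty_c(Q^r_{x_0})$ so no rescaling is needed, and the Jacobian of $\exp_{x_0}$ does not appear since both the quasiconvexity inequality and the functional are already expressed with respect to $\mu$).
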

\begin{proof}
Let $n\in\NN$ be the dimension of $M$. Let $x_0\in\Omega$ and we assume that $r>0$ is small enough for $Q_{x_0}^r$ to be contained in a neighbourhood $U\subseteq\Omega$ of $x_0$ such that the vector bundle $W^{1,\infty}(\Omega,\R^m)$ can be ``trivialized'' as we discussed above at the beginning of this section, by choosing an orthonormal frame $E_1,\dots,E_n$ in $U$, with an associated map $I$ and distance $\delta$ as in formulas~\eqref{mapI} and~\eqref{distL}. Moreover, since in what follows all the arguments of the continuous function $f$ will be bounded (since we are working in $W^{1,\infty}(\Omega,\R^m)$ and $u_j$ is a bounded sequence in $W^{1,\infty}$, being weakly$^*$ convergent), we can assume that $f$ is bounded and has a uniform modulus of continuity $\omega$ in $\mathscr{L}(TU, \mathbb{R}^m)$ (with respect to the distance $\delta$) which is continuous, bounded and concave, hence subadditive. Moreover, since it is bounded, by possibly adding a constant to $f$, we can also assume that it is positive.\\
We consider a smooth function $\psi: Q_{x_0}^r\to[0,1]$ with compact support and we set, for every $j\in\NN$, 
$$
\varphi_j=\psi(u_j-u)\in W_0^{1,\infty}(Q_{x_0}^r,\mathbb{R}^m),
$$
hence,
\begin{equation}\label{eq590}
d\varphi_j=\psi\,d (u_j-u)+d\psi\otimes (u_j-u).
\end{equation}
To simplify the notation in the computations below, we define
\begin{equation*}
L_x=d\exp_{x_0}[\exp_{x_0}^{-1}(x)],
\end{equation*}
for every $x\in Q_{x_0}^r$ and we start by applying the hypothesis of quasiconvexity for $f$, observing that inequality~\eqref{quasidefR} also holds for maps in $W_0^{1,\infty}(Q_{x_0}^r,\mathbb{R}^m)$, by approximating them with a sequence of functions in $C^\infty_c(Q_{x_0}^r,\mathbb{R}^m)$. Indeed, the function $o(1)$ depends, by hypothesis, in a monotonically nondecreasing way, on the $L^\infty$ norm of the gradient of such maps and this approximation can be chosen in such a way to control the $L^\infty$ norms of the gradients of the approximating functions with the $L^\infty$ norm of the gradient of the target function.\\
Hence, setting $\alpha_{x_0}=du[x_0]$ in inequality~\eqref{quasidefR}, for every $j\in\NN$, we have
\begin{align*}
f\big(du[x_0]\big)\leqslant&\,\fint_{Q_{x_0}^r}\!\!\!\!f\big(du[x_0]+d \varphi_j[x]\circ\!L_x\big)\,d\mu(x)+o_j(1)\\
=&\,\fint_{Q_{x_0}^r}\!\!\!\!f\big(du[x_0]+\psi\,d (u_j-u)[x]\circ\!L_x+d\psi[x]\otimes (u_j-u)\circ\!L_x\big)\,d\mu(x)+o_j(1)\\
=&\,\fint_{Q_{x_0}^r}\!\!\!\!f\big(du[x_0]-\psi\,du[x]\circ\!L_x\!+\psi\,du_j[x]\circ\!L_x\!+d\psi[x]\otimes (u_j-u)\circ\!L_x\big)\,d\mu(x)+o_j(1),
\end{align*}
where the function $o_j(1)$ depends monotonically on $\Vert d\varphi_j\Vert_\infty$, as in Definition~\ref{RiemannianQC}.\\
Then, by the properties of the modulus of continuity $\omega$ of $f$, there holds 
\begin{align*}
f\big(du[x_0]-&\,\psi\,du[x]\circ\!L_x+\psi\,du_j[x]\circ\!L_x+d\psi[x]\otimes (u_j-u)\circ\!L_x\big) 
-f\big(du_j[x]\circ\!L_x\big)\\
\leqslant&\,\omega\big(\big|du[x_0]-\psi\,du[x]\circ\!L_x+(\psi-1)du_j[x]\circ\!L_x+d\psi[x]\otimes (u_j-u)\circ\!L_x\big|_{g_{x_0}}\big)\\
\leqslant&\,\omega\big(\big|du[x_0]-\psi\,du[x]\circ\!L_x|_{g_{x_0}}\big)+\omega\big(\big|(\psi-1)du_j[x]\circ\!L_x\big|_{g_{x_0}}\big)\\
&\,+\omega\big(\big|d\psi[x]\otimes (u_j-u)\circ\!L_x\big|_{g_{x_0}}\big)\\
\leqslant&\,\omega\big(\big|du[x_0]-\psi\,du[x]\circ\!L_x|_{g_{x_0}}\big)+\omega\big(C|\psi-1|\big)+\omega\big(\big|d\psi[x]\otimes (u_j-u)\circ\!L_x\big|_{g_{x_0}}\big),
\end{align*}
where we kept into account that the distance $\delta$ on $\mathscr{L}(TU, \mathbb{R}^m)$, restricted to the fibers coincides with the one induced by the metric $g$ of $M$. Thus, we obtain 
\begin{align}
f\big(du[x_0]\big)\leqslant&\,\fint_{Q_{x_0}^r}f\big(du_j[x]\circ\!L_x\big)\,d\mu(x)+\fint_{Q_{x_0}^r}\omega\big(\big|du[x_0]-\psi\,du[x]\circ\!L_x|_{g_{x_0}}\big)\,d\mu(x)\\
&\,+\fint_{Q_{x_0}^r}\omega\big(C|\psi-1|\big)\,d\mu(x)+\fint_{Q_{x_0}^r}\omega\big(\big|d\psi[x]\otimes (u_j-u)\circ\!L_x\big|_{g_{x_0}}\big)\,d\mu(x)+o_j(1)\,.\\
&\,\ \label{eq556}
\end{align}
We then deal with the term 
$$
\fint_{Q_{x_0}^r}f\big(du_j[x]\circ\!L_x\big)\,d\mu(x)\leqslant\fint_{Q_{x_0}^r}f\big(du_j[x]\big)\,d\mu(x)+\fint_{Q_{x_0}^r}\omega\big(\delta\big(du_j[x]\circ\!L_x,du_j[x]\big)\big)\,d\mu(x),
$$
showing that the last integral is bounded by a function $o(1)$ independent of $j\in \NN$.\\
Indeed, recalling formula~\eqref{distL}, we have
\begin{align}
\omega\big(\delta\big(du_j[x]\circ\!L_x,du_j[x]\big)\big)
=&\,\omega\big(d^M(x_0,x\big)+\big\Vert\, Idu_j[x]\circ\!L_x - Idu_j[x]\,\big\Vert\big)\\
\leqslant&\,\omega\big(d^M(x_0,x\big)\big)+\omega\big(\big\Vert\, Idu_j[x]\circ\!L_x - Idu_j[x]\,\big\Vert\big)\\
\leqslant&\,\omega(r)+\omega\big(\big\Vert\, Idu_j[x]\circ\!L_x - Idu_j[x]\,\big\Vert\big)
\end{align}
and setting $\alpha_k=\big(Idu_j[x]\big)_k=du_j[x](E_k)\in\R^m$, for every $k\in\{1,\dots,n\}$, we have
\begin{align}
\big\Vert Idu_j[x]\circ\!L_x - Idu_j[x]\big\Vert=&\,\big\Vert I\big(du_j[x]\circ d\exp_{x_0}[\exp_{x_0}^{-1}(x)]\big)-Idu_j[x]\big\Vert\\
=&\,\bigg\Vert\sum_{k=1}^n\alpha_k \big(d\exp_{x_0}[\exp_{x_0}^{-1}(x)](E_i)\big)^k-\alpha_i\bigg\Vert\\
=&\,\bigg\Vert\sum_{k=1}^n\alpha_k\Big\{\big(d\exp_{x_0}[\exp_{x_0}^{-1}(x)](E_i)\big)^k-\delta_i^k\Big\}\bigg\Vert\\
\leqslant&\,C\Vert\alpha\Vert\,\big\Vert d\exp_{x_0}[\exp_{x_0}^{-1}(x)]-\mathrm{Id}_{T_{x_0}M}\big\Vert\\
=&\,C\big\Vert Idu_j[x]\big\Vert\,\big\Vert d\exp_{x_0}[\exp_{x_0}^{-1}(x)]-\mathrm{Id}_{T_{x_0}M}\big\Vert\\
=&\,\Vert du_j\Vert_{\infty}\,o(1)=o(1)\label{eq570}
\end{align}
for some constant $C$, as the norms $\Vert du_j\Vert_{\infty}$ are uniformly bounded.\\
Hence, inequality~\eqref{eq556} becomes
\begin{align}
f\big(du[x_0]\big)\leqslant&\,\fint_{Q_{x_0}^r}f\big(du_j[x]\big)\,d\mu(x)+\fint_{Q_{x_0}^r}\omega\big(\big|du[x_0]-\psi\,du[x]\circ\!L_x|_{g_{x_0}}\big)\,d\mu(x)+o(1)\\
&\,+\fint_{Q_{x_0}^r}\omega\big(C|\psi-1|\big)\,d\mu(x)+\fint_{Q_{x_0}^r}\omega\big(\big|d\psi[x]\otimes (u_j-u)\circ\!L_x\big|_{g_{x_0}}\big)\,d\mu(x)+o_j(1)\\
&\,\ \label{eq557}
\end{align}
where the function $o(1)$ is independent of $j\in\NN$, while the functions $o_j(1)$ are equal to $\eta(r,\Vert d\varphi_j\Vert)$ for some function $\eta$ going to zero as $r\to0$ and monotone nondecreasing (and we can also clearly assume continuous from the right) in its second argument.\\
Observing now that, by equation~\eqref{eq590}, there holds 
\begin{equation}
\Vert d\varphi_j\Vert_\infty\leqslant\Vert\psi\Vert_\infty\,\Vert du_j-du\Vert_\infty+\Vert d\psi\Vert_\infty\,\Vert u_j- u\Vert_\infty,
\end{equation}
we have
\begin{align}
o_j(1)=\eta(r,\Vert d\varphi_j\Vert_\infty)&\,\leqslant \eta(r,\Vert\psi\Vert_\infty\,\Vert du_j-du\Vert_\infty+\Vert d\psi\Vert_\infty\,\Vert u_j- u\Vert_\infty)\\
&\,\leqslant \eta(r,C+\Vert d\psi\Vert_\infty\,\Vert u_j- u\Vert_\infty),\label{eq595}
\end{align}
for some constant $C$ uniformly bounding from above $\Vert\psi\Vert_\infty\,\Vert du_j-du\Vert_\infty$. It follows that
$$
\limsup_{j\to\infty} \,o_j(1)\leqslant \limsup_{j\to\infty}\eta(r,C+\Vert d\psi\Vert_\infty\,\Vert u_j- u\Vert_\infty)=\eta(r,C)=o(1),
$$
with a function $o(1)$ independent of $j\in\NN$, by the properties of the function $\eta$ and the fact that $\Vert u_j- u\Vert_\infty\to0$ (by the theorem of Ascoli--Arzel\`a, being all the functions $u_j$ equibounded and equicontinuous). Then, passing to the liminf as $j\to\infty$ in formula~\eqref{eq557}, we obtain
\begin{align*}
f\big(du[x_0]\big)\leqslant&\,\liminf_{j\to\infty}\fint_{Q_{x_0}^r}f\big(du_j[x]\big)\,d\mu(x)+
\fint_{Q_{x_0}^r}\omega\big(\big|du[x_0]-\psi\,du[x]\circ\!L_x|_{g_{x_0}}\big)\,d\mu(x)\\
&\,+\fint_{Q_{x_0}^r}\omega\big(C|\psi-1|\big)\,d\mu(x)+o(1)\,,
\end{align*}
as the last integral in such formula goes to zero, by the dominated convergence theorem. Now letting $\psi$ go to the characteristic function of ${Q}_{x_0}^r$, the last integral in this formula vanishes (again by the dominated convergence theorem), hence
$$
f\big(du[x_0]\big)\leqslant\liminf_{j\to\infty}\fint_{Q_{x_0}^r}f\big(du_j[x]\big)\,d\mu(x)+
\fint_{Q_{x_0}^r}\omega\big(\big|du[x_0]-du[x]\circ\!L_x|_{g_{x_0}}\big)\,d\mu(x)+o(1)\,.
$$
Finally, we want to show that the second integral on the right is bounded by a function $o(1)$. By arguing as above, when we dealt with $du_j$, we have
\begin{align}
\omega\big(\big|du[x_0]-du[x]\circ\!L_x|_{g_{x_0}}\big)\leqslant&\,\omega\big(\delta\big(du[x_0],du[x]\big)+\delta\big(du[x],du[x]\circ\!L_x|_{g_{x_0}}\big)\big)\\
\leqslant&\,\omega\big(\delta\big(du[x_0],du[x]\big)\big)+\omega\big(\delta\big(du[x],du[x]\circ\!L_x|_{g_{x_0}}\big)\big)\\
=&\,\omega\big(d^M(x_0,x)\big)+\omega\big(\big\Vert Idu[x_0]-Idu[x]\big\Vert\big)+o(1)\\
=&\,\omega\big(\big\Vert Idu[x_0]-Idu[x]\big\Vert\big)+o(1),
\end{align}
hence,
$$
\fint_{Q_{x_0}^r}\omega\big(\big|du[x_0]-du[x]\circ\!L_x|_{g_{x_0}}\big)\,d\mu(x)\leqslant \fint_{Q_{x_0}^r}\omega\big(\big\Vert Idu[x_0]-Idu[x]\big\Vert\big)\,d\mu(x)+o(1).
$$
If now $x_0$ is a Lebesgue point for the function $Idu$, it is easy to show that the integral on the right goes to zero as $r\to 0$, being the function $t\mapsto \omega(t)$ continuous and bounded (and going to zero as $t\to0$). Thus, we conclude that at $\mu$--almost every point $x_0$ of $\Omega$, there holds 
\begin{equation}\label{eq558}
f\big(du[x_0]\big)\leqslant\liminf_{j\to\infty}\fint_{Q_{x_0}^r}f\big(du_j\big)\,d\mu+o(1)\,.
\end{equation}
As $f$ and $\Omega$ are bounded, the following integral is finite,
\begin{equation*}
\liminf_{j\to\infty} \int_{\Omega}f\big(du_j)d\mu=m<+\infty
\end{equation*}
and we can define the finite Radon measures $\nu_j$ on $\Omega$, for every $j\in\NN$, given by 
\begin{equation*}
d\nu_j=f\big(du_j\big)\,d\mu,
\end{equation*}
satisfying the uniform bound $\Vert\nu_j\Vert=\int_\Omega f\big(du_j\big)\,d\mu\leqslant C$. Hence, by the Banach--Alaoglu--Bourbaki theorem and without loss of generality, we may assume that $\nu_j$ weakly$^*$ converges to some limit Radon measure $\nu$.\\
For every open subset $G$ of $\Omega$, by the properties of the weak$^*$ convergence, we have
\begin{equation*}
\nu(G)\leqslant\liminf_{j\to\infty}\nu_j(G)=\liminf_{j\to\infty}\int_{G}f\big(du_j\big)\,d\mu\leqslant\liminf_{j\to\infty} \int_{\Omega}f\big(du_j\big)\,d\mu=m,
\end{equation*}
hence, by the outer regularity of the measure $\mu$, it follows that $\nu\ll\mu$. Thus, we may apply the Radon--Nikodym theorem, obtaining a function $h:\Omega\to [0,+\infty]$ such that $d\nu=h\,d\mu$ and
\begin{equation*}
h(x)=\lim_{r\to 0} \frac{\nu\big(\overline{Q}_{x}^r\big)}{\mu\big(\overline{Q}_{x}^r\big)},
\end{equation*}
for $\mu$--almost every $x\in\Omega$.\\
Then, at the points $x_0\in\Omega$ where inequality~\eqref{eq558} is satisfied and the above limit holds, we have
\begin{align*}
f\big(du[x_0]\big)\leqslant&\,\liminf_{r\to0}\bigg(\liminf_{j\to\infty}\fint_{Q_{x_0}^r}f\big(du_j\big)\,d\mu+o(1)\bigg)=\liminf_{r\to0}\liminf_{j\to\infty}\frac{\nu_j\big(Q^r_{x_0}\big)}{\mu\big(Q^r_{x_0}\big)}\\
=&\,\liminf_{r\to0}\liminf_{j\to\infty}\frac{\nu_j\big(\overline{Q}^r_{x_0}\big)}{\mu\big(\overline{Q}^r_{x_0}\big)}\leqslant\liminf_{r\to0}\frac{\nu\big(\overline{Q}^r_{x_0}\big)}{\mu\big(\overline{Q}^r_{x_0}\big)}=h(x_0),
\end{align*}
as $\liminf_{j\to\infty}\nu_j\big(\overline{Q}^r_{x_0}\big)\leqslant \nu\big(\overline{Q}^r_{x_0}\big)$, being $\overline{Q}^r_{x_0}$ closed sets. Hence, $f\big(du[x]\big)\leqslant h(x)$ $\mu$--almost everywhere in $\Omega$, thus
$$
F(u,\Omega)\!=\!\fint_\Omega f\big(du\big)\,d\mu\!\leqslant\!\fint_\Omega h\,d\mu\!=\!\nu(\Omega)\!\leqslant\!\liminf_{j\to\infty}\nu_j(\Omega)\!=\!\liminf_{j\to\infty}\fint_\Omega f\big(du_j\big)\,d\mu\!=\!\liminf_{j\to\infty}F(u_j,\Omega)
$$
and the proof is complete.
\end{proof}

The following theorem is the converse of the previous one.

\begin{Teorema}\label{sci->QCR}
Let $(M,g)$ be a smooth, connected and complete Riemannian manifold and $\mu$ its canonical volume measure. Let $f:\mathscr{L}(TM, \mathbb{R}^m)\to \mathbb{R}$ be a continuous function. If for every open and bounded subset $\Omega\subseteq M$, the functional
\begin{equation*}
F(u,\Omega)=\int_{\Omega}f\big(du\big)\,d\mu
\end{equation*}
is sequentially lower semicontinuous in the weak* topology of $W^{1,\infty}(\Omega,\R^m)$, then the function $f$ is quasiconvex in the sense of Definition~\ref{RiemannianQC}. 
\end{Teorema}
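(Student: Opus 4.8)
The plan is to mimic, in the Riemannian framework, the classical argument that extracts quasiconvexity from lower semicontinuity (cf.~\cite{Ambrosio, AcerbiFusco}): build a rapidly oscillating competitor that weakly$^*$ converges to an ``affine'' map, apply the hypothesis, and let all the geometric distortions be absorbed into the correction term $o(1)$ of Definition~\ref{RiemannianQC}. Fix $x_0\in M$, a linear map $\alpha_{x_0}\colon T_{x_0}M\to\R^m$ and $\varphi\in C^\infty_c(Q^r_{x_0},\R^m)$, with $r>0$ small enough that $\exp_{x_0}$ is a diffeomorphism of $\widetilde{Q}^r_{x_0}$ onto $Q^r_{x_0}$ and that the bundle can be trivialized as in Section~\ref{quasisec} by an orthonormal frame $E_1,\dots,E_n$, with associated map $I$ and distance $\delta$. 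Since every map we shall insert into $f$ is uniformly bounded (by $|\alpha_{x_0}|_{g_{x_0}}+C\Vert d\varphi\Vert_\infty$), exactly as in the proof of Theorem~\ref{QCR->sci} I may assume $f$ bounded, positive and endowed with a bounded, concave (hence subadditive) modulus of continuity $\omega$ on the relevant portion of $\mathscr{L}(TU,\R^m)$. Write $L_x=d\exp_{x_0}[\exp_{x_0}^{-1}(x)]$ and let $y=\exp_{x_0}^{-1}(x)\in\widetilde{Q}^r_{x_0}\subseteq\R^n$ be the normal coordinates centered at $x_0$.

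First I would set up the competitor and the oscillation. Let $u(x)=\alpha_{x_0}\big(\exp_{x_0}^{-1}(x)\big)$, so that $u\in C^\infty(Q^r_{x_0},\R^m)$, $du[x]=\alpha_{x_0}\circ L_x^{-1}$ and, as $L_{x_0}=\mathrm{Id}$, $du[x_0]=\alpha_{x_0}$. Pulling $\varphi$ back gives $\widetilde\varphi=\varphi\circ\exp_{x_0}\in C^\infty_c(\widetilde{Q}^r_{x_0},\R^m)$, with $D\widetilde\varphi(y)=d\varphi[x]\circ L_x$, and I extend $\widetilde\varphi$ by $r\Z^n$--periodicity, taking the period cell to be $\widetilde{Q}^r_{x_0}$ itself. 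For $j\in\NN$ I set
\[
u_j(x)=u(x)+\tfrac1j\,\widetilde\varphi\big(j\exp_{x_0}^{-1}(x)\big),\qquad du_j[x]=\big(\alpha_{x_0}+D\widetilde\varphi(jy)\big)\circ L_x^{-1}.
\]
Since $\widetilde\varphi$ is compactly supported in one cell, $u_j=u$ near $\partial Q^r_{x_0}$; moreover $\Vert u_j-u\Vert_\infty\leqslant\frac1j\Vert\widetilde\varphi\Vert_\infty\to0$ and $\Vert du_j\Vert_\infty$ stays bounded, so by the observation preceding Theorem~\ref{QCR->sci} one has $u_j\weakkk u$ in $W^{1,\infty}(Q^r_{x_0},\R^m)$.

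Next I would apply the semicontinuity hypothesis on $\Omega=Q^r_{x_0}$ and read off both sides. For the left--hand side, by the modulus of continuity together with $L_x\to\mathrm{Id}$ and $d^M(x,x_0)\to0$ as $r\to0$,
\[
\fint_{Q^r_{x_0}}f(du)\,d\mu=\fint_{Q^r_{x_0}}f\big(\alpha_{x_0}\circ L_x^{-1}\big)\,d\mu\geqslant f(\alpha_{x_0})-o(1).
\]
For the right--hand side, transporting $du_j[x]$ back to the fiber over $x_0$ costs, again via $\omega$ and $L_x^{-1}\to\mathrm{Id}$ (uniformly in $j$, as $D\widetilde\varphi$ is bounded), only an $o(1)$ independent of $j$:
\[
\fint_{Q^r_{x_0}}f(du_j)\,d\mu=\fint_{Q^r_{x_0}}f\big(\alpha_{x_0}+D\widetilde\varphi(jy)\big)\,d\mu(x)+o(1).
\]
The integrand $y\mapsto f\big(\alpha_{x_0}+D\widetilde\varphi(jy)\big)$ is the rescaling of an $r\Z^n$--periodic function whose period cell is precisely $\widetilde{Q}^r_{x_0}$; by Riemann--Lebesgue averaging it converges weakly$^*$ in $L^\infty$ to the constant $\fint_{\widetilde{Q}^r_{x_0}}f\big(\alpha_{x_0}+D\widetilde\varphi(z)\big)\,d\L^n(z)$, so that, writing $d\mu=\sqrt{\det g}\,d\L^n$ in normal coordinates and using $\sqrt{\det g}\to1$ as $r\to0$,
\[
\lim_{j\to\infty}\fint_{Q^r_{x_0}}f\big(\alpha_{x_0}+D\widetilde\varphi(jy)\big)\,d\mu(x)=\fint_{\widetilde{Q}^r_{x_0}}f\big(\alpha_{x_0}+D\widetilde\varphi\big)\,d\L^n+o(1)=\fint_{Q^r_{x_0}}f\big(\alpha_{x_0}+d\varphi[x]\circ L_x\big)\,d\mu(x)+o(1),
\]
the last step because the slow--variable $\mu$--average and the Lebesgue cell--average differ by $o(1)$ as $r\to0$. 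Combining the three displays with the lower semicontinuity inequality $\fint f(du)\,d\mu\leqslant\liminf_j\fint f(du_j)\,d\mu$ produces exactly~\eqref{quasidefR}.

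The main obstacle is precisely this last chain of identities: one must check that the several sources of error — the fiberwise transport $L_x^{-1}\approx\mathrm{Id}$ bridging the fibers over $x$ and over $x_0$, and the discrepancy between the Lebesgue cell--average yielded by homogenization and the Riemannian $\mu$--average appearing in~\eqref{quasidefR} — all collapse into a single $o(1)$ that vanishes as $r\to0$ and depends on $\Vert d\varphi\Vert_\infty$ in the monotone nondecreasing way demanded by Definition~\ref{RiemannianQC}; this last point requires tracking how the bounds (and hence the effective modulus $\omega$) grow with $\Vert d\varphi\Vert_\infty$. The homogenization (Riemann--Lebesgue) step is the device that turns the fast--point integral into a genuine cell average, and the deliberate choice of the period cell equal to $\widetilde{Q}^r_{x_0}$ is what makes this cell average coincide, up to $o(1)$, with the average in~\eqref{quasidefR}.
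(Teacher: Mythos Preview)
Your argument is correct and follows essentially the same route as the paper: build a fast--oscillating perturbation via the $r\Z^n$--periodic extension of $\varphi\circ\exp_{x_0}$, apply the lower--semicontinuity hypothesis on $Q^r_{x_0}$, and absorb all geometric distortions ($L_x\approx\mathrm{Id}$, $\sqrt{\det g}\approx1$) into the correction $o(1)$. The only differences are cosmetic: you pick the specific ``affine'' map $u(x)=\alpha_{x_0}\big(\exp_{x_0}^{-1}(x)\big)$ so that $du[x]=\alpha_{x_0}\circ L_x^{-1}$ exactly (the paper takes any smooth $u$ with $du[x_0]=\alpha_{x_0}$ and pays an extra $\omega$--term), and you invoke Riemann--Lebesgue averaging abstractly where the paper performs the explicit change of variables $w=hy$ and uses periodicity by hand. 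Both lead to the same cell average, and the tracking of the monotone dependence on $\Vert d\varphi\Vert_\infty$ is handled in the paper exactly as you sketch.

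One small slip, harmless for the proof: the claim ``$u_j=u$ near $\partial Q^r_{x_0}$'' is false for even $j$. Indeed, for $y_1=r/2-\epsilon$ one gets $jy_1=jr/2-j\epsilon$, which for $j$ even reduces modulo $r$ to $-j\epsilon$, i.e.\ near the \emph{center} of the fundamental cell, where $\widetilde\varphi$ need not vanish. This does not matter: weak$^*$ convergence $u_j\weakkk u$ already follows from $\Vert u_j-u\Vert_\infty\leqslant\frac1j\Vert\widetilde\varphi\Vert_\infty\to0$ together with the uniform bound on $du_j$, and the hypothesis is lower semicontinuity on $W^{1,\infty}(Q^r_{x_0},\R^m)$, not on $W^{1,\infty}_0$. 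Just drop that sentence.
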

\begin{proof}
Let $n\in\NN$ be the dimension of $M$. We adopt the same setting and notation as in the proof of Theorem~\ref{QCR->sci}, in particular, fixed $x_0\in M$ and $\alpha_{x_0}\in(T_{x_0}^*M)^m$, we work in a neighbourhood $U\subseteq M$ of $x_0$ such that the vector bundle $\mathscr{L}(TU, \mathbb{R}^m)$ can be ``trivialized'' and $Q_{x_0}^r\subseteq U$.\\
We consider a smooth function $u:M\to\R^m$ such that $du[x_0]=\alpha_{x_0}$ (which clearly exists) and let $\varphi\in C^{\infty}_c\big(Q_{x_0}^r,\R^m\big)$, then the function $\varphi\circ\exp_{x_0}$ is defined on the open cube $r\left(-\frac12,\frac12\right)^n\subseteq T_{x_0}M$, smooth and with compact support.
We define the extension by periodicity $\psi:T_{x_0}M\to\R^m$ of $\varphi\circ\exp_{x_0}$ to the whole $T_{x_0}M$ (defined zero on the boundaries of the cubes) and we set
$$
\varphi_h(x)=\frac{1}{h}\psi(h\exp_{x_0}^{-1}(x))
$$
for every $x\in Q_{x_0}^r$ and $h\in\NN$.
Then, the functions $\varphi_h:Q_{x_0}^r\to\R^m$ are smooth with compact support and 
\begin{equation*}
\varphi_h\weakkk0\quad \text{ in }W^{1,\infty}_0(Q_{x_0}^r, \mathbb{R}^m)
\end{equation*}
as $h\to\infty$, indeed, clearly $\varphi_h\to 0$ and the differentials $d \varphi_h$ are uniformly bounded, holding
$$
d\varphi_h[x]=\frac{1}{h}d\psi\big[h\exp_{x_0}^{-1}(x)\big]h \circ d\exp_{x_0}^{-1}[x]=d\psi\big[h\exp_{x_0}^{-1}(x)\big]\circ d\exp_{x_0}^{-1}[x].
$$
Thus, since $u+\varphi_h\weakkk u$ in $W^{1,\infty}(Q_{x_0}^r, \mathbb{R}^m)$, by the hypothesis of lower semicontinuity of the functional $F$, there holds
\begin{align*}
\int_{Q_{x_0}^r}f\big(du[x]\big)\,d\mu(x)&\leqslant \liminf_{h\to\infty} \int_{Q_{x_0}^r}f\big(du[x]+d \varphi_h[x]\big)\,d\mu(x)\\
&=\liminf_{h\to\infty}\int_{Q_{x_0}^r}f\big(du[x]+d\psi\big[h\exp_{x_0}^{-1}(x)\big]\circ d\exp_{x_0}^{-1}[x]\big)\,d\mu(x),
\end{align*}
for every $r>0$ small enough. Then, arguing as in the proof of Theorem~\ref{QCR->sci} (more precisely, following the same argument leading to estimate~\eqref{eq570}), considering a suitable modulus of continuity $\omega$ of $f$, we have
\begin{align}
\big|f\big(du&\,[x]+d\psi\big[h\exp_{x_0}^{-1}(x)\big]\circ d\exp_{x_0}^{-1}[x])-f\big(du[x_0]+d\psi\big[h\exp_{x_0}^{-1}(x)\big]\circ \mathrm{Id}_{T_{x_0}M}\big)\big|\\
&\,\leqslant\omega\big(\delta\big(du[x],du[x_0]\big)\big)+
\omega\big(\delta\big(d\psi\big[h\exp_{x_0}^{-1}(x)\big]\circ d\exp_{x_0}^{-1}[x],d\psi\big[h\exp_{x_0}^{-1}(x)\big]\big)\big)\\
&\,=\omega\big(d^M(x,x_0)\big)+\omega\big(\big\Vert Idu[x]-Idu[x_0]\big\Vert\big)+o(1)\Vert d\psi\Vert_\infty\\
&\,=o(1)(1+\Vert d\psi\Vert_\infty)\label{eq580}
\end{align}
as $r\to 0$, since the function $x\mapsto Idu[x]$ is smooth. Hence,
\begin{equation}\label{eq572}
\int_{Q_{x_0}^r}f\big(du[x]\big)\,d\mu(x)\leqslant\liminf_{h\to\infty}\int_{Q_{x_0}^r}f\big(du[x_0]+d\psi\big[h\exp_{x_0}^{-1}(x)\big]\big)\,d\mu(x)+o(1)\mu(Q_{x_0}^r).
\end{equation}
Changing variables as $y=\exp_{x_0}^{-1}(x)$, so $y\in r\big(-\frac{1}{2}, \frac{1}{2} \big)^n\subseteq T_{x_0}M$ and denoting by $dy$ the Lebesgue measure on $T_{x_0}M$ relative to the metric tensor $g_{x_0}$, we have
\begin{align}
\int_{Q_{x_0}^r}f\big(du[x_0]+d\psi\big[h\exp_{x_0}^{-1}(x)\big]\big)\,d\mu(x)=&\,\int_{r\big(-\frac{1}{2}, \frac{1}{2}\big)^n}f\big(du[x_0]+d\psi[hy]\big)\,\mathrm{J}(y)\,dy\\
\leqslant&\,\int_{r\big(-\frac{1}{2}, \frac{1}{2}\big)^n}f\big(du[x_0]+d\psi[hy]\big)\,dy+Cr^{n+1},\\
&\,\label{eq573}
\end{align}
for some constant $C$ independent of $h\in\NN$, since the Jacobian 
$$
\mathrm{J}(y)=\Big|\det_{ij}\big(g_{\exp_{x_0} \!(y)}(d\exp_{x_0}[y](E_i),E_j)\big)\Big|
$$
is a smooth function and goes uniformly to $1$ as $r\to 0$, being $|y|\leqslant r\sqrt{n}/2$ (we recall that $d\exp_{x_0}[0])$ is the identity of $T_{x_0}M$) and $f\big(du[x_0]+d\psi[hy]\big)$ is bounded. Then, changing the variables again as $w=hy$ in the last integral, we get
$$
\int_{Q_{x_0}^r}f\big(du[x_0]+d\psi\big[h\exp_{x_0}^{-1}(x)\big]\big)\,d\mu(x)\leqslant\frac{1}{h^n}\int_{hr\big(-\frac{1}{2}, \frac{1}{2}\big)^n} f\big(du[x_0]+d\psi [w]\big)\,dw+Cr^{n+1}.
$$
Now, by the periodicity of $\psi$, there holds
$$
\frac{1}{h^n}\int_{hr\big[-\frac{1}{2}, \frac{1}{2}\big]^n} f\big(du[x_0]+d\psi [w]\big)\,dw=\frac{1}{h^n}h^n\int_{r\big[-\frac{1}{2}, \frac{1}{2}\big]^n} f\big(du[x_0]+d \varphi [\exp_{x_0}(w)]\circ d\exp_{x_0}[w] \big)\,dw
$$
and if we change (back) variables as $x=\exp_{x_0}(w)$, we obtain
\begin{align*}
\int_{r\big[-\frac{1}{2}, \frac{1}{2}\big]^n}&\, f\big(du[x_0]+d \varphi [\exp_{x_0}(w)]\circ d\exp_{x_0}[w] \big)\,dw\\
=&\,\int_{Q_{x_0}^r} f\big(du[x_0]+d \varphi [x]\circ d\exp_{x_0}[\exp_{x_0}^{-1}(x)])\mathrm{J}(x)^{-1}\,d\mu(x)\\
\leqslant&\,\int_{Q_{x_0}^r} f\big(du[x_0]+d \varphi [x]\circ d\exp_{x_0}[\exp_{x_0}^{-1}(x)]\big)\,d\mu(x)+Cr\mu(Q_{x_0}^r),
\end{align*}
arguing as above.
Hence, by this inequality where $h\in\NN$ is not present in the right hand side and formulas~\eqref{eq572},~\eqref{eq573}, we conclude
$$
\int_{Q_{x_0}^r}f\big(du[x]\big)\,d\mu(x)\leqslant\int_{Q_{x_0}^r} f\big(du[x_0]+d \varphi [x]\circ d\exp_{x_0}[\exp_{x_0}^{-1}(x)]\big)\,d\mu(x)+o(1)\mu(Q_{x_0}^r),
$$
as $\mu(Q_{x_0}^r)\approx \omega_nr^n$, where $\omega_n$ is the measure of the unit ball of $\R^n$.\\
Since, by the continuity of $du$, it easily follows that
\begin{equation*}
\int_{Q_{x_0}^r}f\big(du[x_0]\big)\,d\mu(x)\leqslant\int_{Q_{x_0}^r}f\big(du[x]\big)\,d\mu (x)+o(1)\mu(Q_{x_0}^r)
\end{equation*}
and $du[x_0]=\alpha_{x_0}$, we finally have
\begin{align*}
f(\alpha_{x_0})&\leqslant\frac{1}{\mu(Q_{x_0}^r)}\int_{Q_{x_0}^r} f\big(\alpha_{x_0}
+d \varphi [x]\circ d\exp_{x_0}[\exp_{x_0}^{-1}(x)]\big)\,d\mu(x)+o(1)
\end{align*}
for every $x_0\in M$ and with $o(1)$ going to zero as $r\to 0$, depending only on the $L^{\infty}$ norm of $d\varphi$. Moreover, by tracing back how we obtained such function $o(1)$, it is clear that it can be chosen in a way that such dependence is monotonically nondecreasing (see in particular, estimate~\eqref{eq580}). Hence, the function $f$ is 
quasiconvex according to Definition~\ref{RiemannianQC}.
\end{proof}

Putting together the two theorems, we have the following one which generalizes (in our case of $f$ continuous) the results of Acerbi and Fusco for $p=+\infty$. 

\begin{Teorema}\label{QCR=sci}
Let $(M,g)$ be a smooth, connected and complete Riemannian manifold and $\mu$ its canonical volume measure. Let $f:\mathscr{L}(TM, \mathbb{R}^m)\to \mathbb{R}$ be continuous. 
Then, $f$ is quasiconvex in the sense of Definition~\ref{RiemannianQC} if and only if for every open and bounded subset $\Omega\subseteq M$, the functional 
$$
u\mapsto F(u,\Omega)=\int_\Omega f(du)\,d\mu
$$
is sequentially lower semicontinuous in the weak* topology of $W^{1,\infty}(\Omega,\R^m)$.
\end{Teorema}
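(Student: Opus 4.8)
The plan is to observe that this statement is nothing more than the conjunction of the two results already established, so I would prove it simply by chaining the two implications, after verifying that their hypotheses match. Both Theorem~\ref{QCR->sci} and Theorem~\ref{sci->QCR} are stated under exactly the standing assumptions appearing here --- that $(M,g)$ is smooth, connected and complete with canonical measure $\mu$, and that $f:\mathscr{L}(TM,\mathbb{R}^m)\to\mathbb{R}$ is continuous --- so no adjustment of constants or function classes is needed and the two statements can be concatenated verbatim.

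For the forward implication, I would assume $f$ quasiconvex in the sense of Definition~\ref{RiemannianQC} and invoke Theorem~\ref{QCR->sci} directly to conclude that $u\mapsto F(u,\Omega)=\int_\Omega f(du)\,d\mu$ is sequentially weak$^*$ lower semicontinuous on $W^{1,\infty}(\Omega,\mathbb{R}^m)$ for every open and bounded $\Omega\subseteq M$. For the reverse implication, I would assume this lower semicontinuity property for every such $\Omega$ and invoke Theorem~\ref{sci->QCR} to conclude that $f$ is quasiconvex in the sense of Definition~\ref{RiemannianQC}.

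Since these are precisely the two directions of the asserted ``if and only if'', the two invocations together complete the argument. I do not expect any genuine obstacle at this stage: all the analytic substance --- the Fonseca--M\"uller blow-up localization, the passage between $C^\infty_c$ and $W^{1,\infty}_0$ admissible perturbations, and the control of the exponential-map ``correction term'' $o(1)$ --- has already been carried out in the two constituent proofs. The only point I would double-check is the exact compatibility of the quantifiers, namely that both theorems range over the same class of open and bounded subsets $\Omega\subseteq M$ and share identically the continuity requirement on $f$, which they do; hence the combined characterization follows at once.
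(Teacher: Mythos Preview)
Your proposal is correct and matches exactly what the paper does: the theorem is stated immediately after Theorems~\ref{QCR->sci} and~\ref{sci->QCR} with the remark that it follows by ``putting together the two theorems'', with no further proof given. Your check that the hypotheses and quantifiers align is appropriate, and nothing more is needed.
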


\section{Some remarks and possible research directions}

A natural continuation of our work would be to extend the previous results to the $W^{1,p}(\Omega, \mathbb{R}^m)$ setting, with $p\in[1,+\infty)$, in order to obtain the analogue of Theorem~\ref{Teocap1pfinito} (notice that the fact that the semicontinuity of the functional implies quasiconvexity of $f$ follows immediately by Theorem~\ref{sci->QCR} in such setting, as in the Euclidean case).

Focardi and Spadaro in~\cite{FocardiSpadaro} showed an extension of the Acerbi--Fusco theorems (with $f$ continuous, like us) in the case of Sobolev maps $u:\Omega\to M$, with $\Omega$ an open bounded subset of $\mathbb{R}^n$ and $M$ a Riemannian manifold. Therefore, a possible future research line could be to ``combine'' Theorem~\ref{QCR=sci} with the results of Focardi and Spadaro, thus obtaining a characterization of quasiconvexity in the completely Riemannian context of Sobolev maps between two Riemannian manifolds.

Moreover, the full generalization of the Acerbi--Fusco results would be to show them for the ``naturally defined'' Carath\'eodory functions on a Riemannian manifold, that is, functions which are Carath\'eodory in the usual way after expressing them by means of the local ``trivializations'' of the bundle $\mathscr{L}(TM, \mathbb{R}^m)$ (or of $\mathscr{L}(TM,TN)$ after ``combining'' our definition with the one of Focardi--Spadaro), as we did at the beginning of this section.

Finally, it would be natural to introduce also the concepts of polyconvexity and rank--one convexity in the Riemannian context and compare them with our definition of quasiconvexity, reasonably obtaining the same relations holding in the Euclidean case.

\bibliographystyle{amsplain}
\bibliography{References.bib}

\end{document}